\newcommand{\diag}{{diag}}
\newcommand{\IM}{{Im}}
\newcommand{\RE}{{Re}}
\newcommand{\R}{\mathbb{R}}
\newcommand{\C}{\mathbb{C}}
\newcommand{\F}{\mathbb{F}}
\newcommand{\D}{\mathbb{D}}
\newcommand{\bH}{\mathbb{H}}
\newcommand{\Hq}{\mathbb{H}}
\newcommand{\bP}{\mathbb{P}}
\newcommand{\bS}{\mathbb{S}}
\newcommand{\M}{\mathcal{M}}
\newcommand{\conv}{{\rm conv}}
\newcommand{\bD}{\mathbb{D}}
 \newtheorem{thm}{Theorem}[section]
 \newtheorem{cor}[thm]{Corollary}
 \newtheorem{lem}[thm]{Lemma}
 \newtheorem{prop}[thm]{Proposition}
 \theoremstyle{definition}
 \theoremstyle{remark}
 \newtheorem{rem}[thm]{Remark}
  \newtheorem{example}[thm]{Example}
 \numberwithin{equation}{section}
\begin{document}

\thanks{The second author was partially supported by FCT
 through project UID/MAT/04459/2013 and the third author was partially supported by FCT through CMA-UBI, project PEst-OE/MAT/UI0212/2013.}

\title[A bridge between quaternionic and complex numerical ranges]{A bridge between quaternionic and complex numerical ranges}

\author[L. Carvalho]{Lu\'{\i}s Carvalho}
\address{Lu\'{\i}s Carvalho, ISCTE - Lisbon University Institute\\    Av. das For\c{c}as Armadas\\     1649-026, Lisbon\\   Portugal}
\email{luis.carvalho@iscte-iul.pt}
\author[Cristina Diogo]{Cristina Diogo}
\address{Cristina Diogo, ISCTE - Lisbon University Institute\\    Av. das For\c{c}as Armadas\\     1649-026, Lisbon\\   Portugal\\ and \\ Center for Mathematical Analysis, Geometry,
and Dynamical Systems\\ Mathematics Department,\\
Instituto Superior T\'ecnico, Universidade de Lisboa\\  Av. Rovisco Pais, 1049-001 Lisboa,  Portugal
}
\email{cristina.diogo@iscte-iul.pt}
\author[S. Mendes]{S\'{e}rgio Mendes}
\address{S\'{e}rgio Mendes, ISCTE - Lisbon University Institute\\    Av. das For\c{c}as Armadas\\     1649-026, Lisbon\\   Portugal\\ and Centro de Matem\'{a}tica e Aplica\c{c}\~{o}es \\ Universidade da Beira Interior \\ Rua Marqu\^{e}s d'\'{A}vila e Bolama \\ 6201-001, Covilh\~{a}}
\email{sergio.mendes@iscte-iul.pt}

\keywords{quaternions, numerical range}
\subjclass[2010]{15B33, 47A12}
\maketitle

\begin{abstract}
We obtain a sufficient condition for the convexity of quaternionic numerical range for complex matrices in terms of its complex numerical range. It is also shown that the Bild coincides with complex numerical range for real matrices. From this result we derive that all real matrices have convex quaternionic numerical range. As an example we fully characterize the quaternionic numerical range of $2\times2$ real matrices.

\end{abstract}


\maketitle
\section{Introduction}
The quaternionic numerical range \cite{Ki,R,STZ,Thompson,Ye1,Ye2,Zh}, the natural counterpart to the complex numerical range\footnote{From now on whenever a concept has no adjective it is because we are considering it over the quaternions.}\cite{GR}, is the image of a quadratic operator over the unit sphere. A striking difference between the quaternionic and complex numerical ranges is convexity. Contrary to what happens in the complex case, the quaternionic numerical range is not convex in general (see, for instance, \cite{R,Ye1,K}). Moreover, for a given finite operator it is very difficult to characterize its numerical range whose shape is, for the most part, unknown. A noticeable exception regarding the study of convexity and shape of the numerical range is the case of normal matrices (see \cite{Ye1,Ye2,STZ}).

The numerical range is a subset of a $4$-dimensional space and so it is hard to figure out its geometry. However we can still get a visualization of the numerical range through the Bild, its two dimensional equivalent, introduced by Kippenhahn \cite{Ki}.
Although tremendously useful to study the convexity and to represent the numerical range, to compute the Bild one needs first to compute the numerical range, which is algebraically very difficult. Even for $2\times2$ matrices the full characterization of the numerical range is still unknown.

In this paper we obtain a sufficient condition for complex matrices to have convex quaternionic numerical range. It turns out that this condition only involves properties of the complex numerical range of the same matrix (see theorem \ref{thm_convexity_qnr}). That is, to figure out if a quaternionic numerical range is convex we only need to compute the complex numerical range. We emphasize that the condition is not necessary, as a simple example shows, see remark \ref{remark convexity}.
For the class of real matrices, or their unitary equivalent, we conclude that the quaternionic numerical range is always convex (see theorem \ref{thm_reals}). Specifically, we establish that the Bild and the complex numerical range are equal. This result is not true in general as Thompson  has shown in \cite{Thompson}. Another consequence of this result is that we can now transport what happens in the complex case to the Bild and, by similarity, to the quaternionic numerical range (see theorem \ref{thm_eq_classes}). The strength of these results is that it breaks off the usual difficulties to calculate and understand what is the numerical range.
Using these results we fully characterize the numerical range of the $2\times2$ real matrices (see example \ref{ex_2x2_matrix}). In examples \ref{ex_matrix_p} and \ref{ex_block_matrix} we characterize the shape of certain classes of $3\times 3$ and $n\times n$ complex matrices.
 The number of examples we could add is as big as the number of results that we have for real matrices in the complex case. It should also be noted that results on some other features of the numerical range, as the numerical radius or the Crawford number, may also be fully transposed to the quaternionic case.

%

\section{Preliminaries}

In this section we present some well known facts about quaternions and fix some notation. The quaternionic skew-field $\bH$ is an algebra of rank $4$ over $\R$ with basis $\{1, i, j, k\}$. The product in $\bH$ is given by $i^2=j^2=k^2=ijk=-1$. Denote the pure quaternions by $\bP=\mathrm{span}_{\R}\,\{i,j,k\}$. For any $q=a_0+a_1i+a_2j+a_3k\in\bH$ let $Re(q)=a_0$ and $Im(q)=a_1i+a_2j+a_3k$ be the real and imaginary parts of $q$, respectively. The conjugate of $q$ is given by $q^*=Re(q)-Im(q)$ and the norm is defined by $|q|^2=qq^*$. Two quaternions $q_1,q_2\in\bH$ are called similar if there exists a unitary quaternion $s$ such that $s^{*}q_2 s=q_1$. Similarity is an equivalence relation and we denote by $[q]$ the equivalence class containing $q$. A necessary and sufficient condition for the similarity of $q_1$ and $q_2$ is that $Re(q_1)=Re(q_2) \textrm{ and }|Im(q_1)|=|Im(q_2)|$.

Let $\F$ denote $\R$, $\C$ or $\bH$. Let $\F^n$ be the $n$-dimensional $\F$-space. The disk with centre $a\in\F^n$ and radius $r>0$ is the set $\D_{\F^n}(a,r)=\{x\in\F^n:|x-a|\leq r\}$ and its boundary is the sphere $\bS_{\F^n}(a,r)$. In particular, if $a=0$ and $r=1$, we simply write $\D_{\F^n}$ and $\bS_{\F^n}$. With this notation, the group of unitary quaternions is denoted by $\bS_{\bH}$.

Let $\M_{n} (\F)$ be the set of all $n\times n$ matrices with entries over $\F$. For $A\in\M_{n} (\F)$ ,  $\bar{A}$ and $A^*$ denote the conjugate and the conjugate transpose of $A$, respectively.
The set \[W_{\F}(A)=\{x^*Ax:x\in \bS_{\F^n}\}\]
is called the numerical range of $A$ in $\F$. It is well known that the numerical range is invariant under unitary equivalence, i.e., $W_{\F}(A)=W_{\F}(U^*AU)$, for every unitary $U\in \M_n(\F)$.

Let $\F=\bH$. It is well known that if $q\in W_{\bH}(A)$ then $[q]\subseteq W_{\bH}(A)$. Therefore, it is enough to study the subset of complex elements in each similarity class. This set is known as $B(A)$, the Bild of $A$
\[
B(A)=W_{\bH}(A)\cap\C
\]
Although the Bild may not be convex, the upper bild $B^+(A)=W_{\bH}(A)\cap\C^+$ is always convex, see \cite{Zh}.

Taking into account that $\F$ can be seen as a real subspace of $\Hq$, we denote the projection of $\Hq$ over $\F$ by $\pi_{\F}:\Hq \rightarrow \F$. The projection of $W_{\Hq}(A)$ over $\F$ is
\begin{equation*}
  \pi_{\F}(W_{\Hq}(A))= \{\pi_{\F}(w): \; w\in W_{\Hq}(A)\}.
\end{equation*}


Given $A \in \M_n(\Hq)$ there exists an associated complex matrix
\[
\chi(A)=\left[
\begin{array}{cc}
A_1 &  A_2 \\
-\bar A_2 & \bar A_1
\end{array}
\right]\in\M_{2n}(\C),
\]
where $A_1, A_2 \in \M_n(\C)$ and $A=A_1+A_2j$.

Au-Yeung found \cite{Ye1} necessary and sufficient conditions for the convexity of $W_{\Hq}(A)$. One of these conditions is an equality between the Bild and the projection over $\C$ of the numerical range. In \cite{Ye2}, he proved that the projection over $\C$ of the numerical range is the complex numerical range of $\chi(A)$.

\begin{thm}\label{theo_convex_yeung}\cite{Ye1, Ye2}
  Let $A\in \M_n(\Hq)$. Then $W_{\bH}(A)$ is convex if and only if one the following statements hold:
  \begin{enumerate}[(i)]
    \item $ W_{\bH}(A)\cap\C=\pi_{\C}(W_{\bH}(A))=W_{\C}(\chi_A)$;
    \item $W_{\bH}(A)\cap\R=\pi_{\R}(W_{\bH}(A))$.
  \end{enumerate}
\end{thm}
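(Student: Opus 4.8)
The plan is to exploit the fact that $W_{\bH}(A)$ is a union of similarity classes, so that its geometry is completely governed by the planar set $B^+(A)$. Identify $\bH$ with $\R\times\bP\cong\R\times\R^3$ by writing $q=\RE(q)+\IM(q)$, so that the real axis is the first factor. By the similarity criterion recalled above, the class of a complex number $a+bi$ with $b\ge 0$ is exactly the sphere $\{a\}\times\{\vc{v}\in\bP:|\vc{v}|=b\}$, and since $[q]\subseteq W_{\bH}(A)$ whenever $q\in W_{\bH}(A)$, the set $W_{\bH}(A)$ is invariant under the $SO(3)$-action that rotates $\bP$ while fixing $\R$. Hence $W_{\bH}(A)$ is the solid of revolution of the upper Bild, $W_{\bH}(A)=\{(x_0,\vc{v}):(x_0,|\vc{v}|)\in B^+(A)\}$, and it is entirely determined by $B^+(A)$, which is convex and compact.

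First I would translate every set in the statement into a condition on $B^+(A)$. Writing a point of $B^+(A)$ as $(x_0,r)$ with $r\ge 0$, a direct computation gives $W_{\bH}(A)\cap\R=\{x_0:(x_0,0)\in B^+(A)\}$, $\pi_{\R}(W_{\bH}(A))=\{x_0:(x_0,r)\in B^+(A)\text{ for some }r\ge 0\}$, $B(A)=\{(x_0,x_1):(x_0,|x_1|)\in B^+(A)\}$ and $\pi_{\C}(W_{\bH}(A))=\{(x_0,x_1):(x_0,r)\in B^+(A)\text{ for some }r\ge|x_1|\}$. Since by \cite{Ye2} one always has $\pi_{\C}(W_{\bH}(A))=W_{\C}(\chi_A)$, the content of (i) is the single equality $B(A)=\pi_{\C}(W_{\bH}(A))$. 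Comparing the two descriptions, both (i) and (ii) are then seen to be equivalent, using the convexity of $B^+(A)$ where needed, to the following \emph{downward-closedness} property: if $(x_0,r)\in B^+(A)$ and $0\le s\le r$, then $(x_0,s)\in B^+(A)$.

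The heart of the argument is to prove that $W_{\bH}(A)$ is convex if and only if $B^+(A)$ is downward closed. For two points $(a,b),(c,d)\in B^+(A)$ and vectors $\vc{v},\vc{w}\in\bP$ with $|\vc{v}|=b$, $|\vc{w}|=d$, the profile of the convex combination $t(a,\vc{v})+(1-t)(c,\vc{w})$ is $\bigl(ta+(1-t)c,\,|t\vc{v}+(1-t)\vc{w}|\bigr)$, and as $\vc{v},\vc{w}$ range over their spheres the second coordinate sweeps exactly the interval $[\,|tb-(1-t)d|,\,tb+(1-t)d\,]$, because $|t\vc{v}+(1-t)\vc{w}|^2$ is an affine function of $\langle\vc{v},\vc{w}\rangle\in[-bd,bd]$. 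Thus $W_{\bH}(A)$ is convex precisely when $(ta+(1-t)c,s)\in B^+(A)$ for every such $t$ and every $s$ in that interval. The upper endpoint $t(a,b)+(1-t)(c,d)$ already lies in the convex set $B^+(A)$, so by convexity of $B^+(A)$ the whole vertical segment down to the lower endpoint lies in $B^+(A)$ as soon as the lower endpoint does; hence convexity of $W_{\bH}(A)$ reduces to requiring $(ta+(1-t)c,|tb-(1-t)d|)\in B^+(A)$. Taking $(a,b)=(c,d)=(x_0,r)$ and letting $t$ run over $[0,1]$ makes $|tr-(1-t)r|=r|2t-1|$ sweep $[0,r]$, which forces downward-closedness; conversely, downward-closedness together with the convexity of $B^+(A)$ makes the reduced condition automatic.

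I expect the main obstacle to be the rigorous justification of the reduction from the genuinely $4$-dimensional convexity of $W_{\bH}(A)$ to the planar condition: one must argue that it suffices to test segments whose endpoints range over all representatives of the two similarity classes, control the range of $|t\vc{v}+(1-t)\vc{w}|$, and handle the degenerate cases $b=0$ or $d=0$ (real endpoints) where the spheres collapse to points. Once this reduction and the bookkeeping of the four sets in terms of $B^+(A)$ are in place, the chain (i) $\Leftrightarrow$ convexity $\Leftrightarrow$ (ii) follows formally.
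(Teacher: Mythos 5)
The paper never proves this theorem: it is quoted verbatim from Au-Yeung's papers \cite{Ye1,Ye2}, so there is no internal proof to compare against, and your proposal has to be judged as a self-contained substitute. Having checked it, I believe it is correct. The solid-of-revolution description $W_{\bH}(A)=\{(x_0,\vc{v}):(x_0,|\vc{v}|)\in B^{+}(A)\}$ follows from the similarity criterion ($q_1\sim q_2$ iff equal real parts and equal moduli of imaginary parts) together with $[q]\subseteq W_{\bH}(A)$; your translations of the four sets into conditions on $B^{+}(A)$ are all accurate; and the key computation $|t\vc{v}+(1-t)\vc{w}|^2=t^2b^2+(1-t)^2d^2+2t(1-t)\langle \vc{v},\vc{w}\rangle$ with $\langle \vc{v},\vc{w}\rangle$ sweeping $[-bd,bd]$ correctly identifies the attainable profiles, the degenerate cases $b=0$ or $d=0$ causing no harm since the interval collapses to a point. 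The three equivalences (convexity $\Leftrightarrow$ downward-closedness, (i) $\Leftrightarrow$ downward-closedness, (ii) $\Leftrightarrow$ downward-closedness) each check out, with convexity of $B^{+}(A)$ invoked exactly where it is needed. Two caveats keep your proof from being unconditional: it relies on the convexity of the upper bild (which the paper takes from \cite{Zh}) and on the equality $\pi_{\C}(W_{\bH}(A))=W_{\C}(\chi_A)$ (which the paper takes from \cite{Ye2}); the latter is genuinely part of statement (i) and you do not reprove it. Since the paper itself treats both as known background, this division of labour is legitimate, and what your route buys is a transparent geometric mechanism: the whole theorem reduces to the observation that a rotationally symmetric subset of $\R^4$ is convex if and only if its planar generating profile is closed under shrinking the radial coordinate.
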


\section{On the convexity of the numerical range}


Firstly, it should be noted that the complex numerical range is invariant under the transpose operator. This is a trivial conclusion of
$x^*A x=\big(x^*A x\big)^t=x^tA^t\overline{x}$, for $x\in \bS_{\C^n}$.
We have:
\begin{lem}\label{transpose matrices}
Let $A \in\M_n(\C)$. Then $W_{\C}(A)=W_{\C}(A^t).$
\end{lem}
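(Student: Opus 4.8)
The plan is to exploit the elementary observation that, for any $x \in \bS_{\C^n}$, the quantity $x^*Ax$ is a $1\times1$ matrix, i.e.\ a scalar, and therefore coincides with its own transpose. First I would write $x^*Ax=\big(x^*Ax\big)^t$ and expand the right-hand side using the reversal law $(BC)^t=C^tB^t$, obtaining $x^t A^t (x^*)^t = x^t A^t \overline{x}$. This is exactly the identity quoted just before the statement.

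Next I would recast this as a membership in $W_{\C}(A^t)$ via the substitution $y=\overline{x}$. The point to check here is that this substitution is legitimate: since $|y|=|\overline{x}|=|x|=1$, the vector $y$ again lies on $\bS_{\C^n}$, and moreover $x^t=\overline{y}^{\,t}=y^*$ while $\overline{x}=y$. Hence $x^*Ax = x^tA^t\overline{x} = y^*A^t y \in W_{\C}(A^t)$. Because $x$ ranges over all of $\bS_{\C^n}$ and complex conjugation is a bijection of the unit sphere onto itself, this already yields the inclusion $W_{\C}(A)\subseteq W_{\C}(A^t)$.

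To close the argument I would apply the very same reasoning with $A^t$ in place of $A$, using $(A^t)^t=A$, which gives $W_{\C}(A^t)\subseteq W_{\C}(A)$; combining the two inclusions produces the desired equality. I do not anticipate any genuine obstacle: the result is essentially computational, and the only point requiring a moment's care is the bookkeeping with the conjugate-transpose identities together with the verification that the conjugation $x\mapsto\overline{x}$ preserves $\bS_{\C^n}$, so that the computation terminates at a bona fide element of $W_{\C}(A^t)$ rather than at a vector off the sphere.
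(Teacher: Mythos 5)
Your proposal is correct and follows exactly the paper's argument: the paper proves the lemma by the same one-line identity $x^*Ax=\big(x^*Ax\big)^t=x^tA^t\overline{x}$ for $x\in\bS_{\C^n}$, and you have merely made explicit the substitution $y=\overline{x}$ and the symmetry giving the reverse inclusion. No gap; your write-up is a fully detailed version of the same computation.
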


The next proposition is the stepping stone of further results in the paper. It provides a more intuitive formulation of $W_{\C}(\chi_A)$ when $A$ is a complex matrix. As usual, $\conv(E)$ denotes the convex hull of $E\subseteq \C$.
%
\begin{prop}\label{complex matrices}
Let $A \in\M_n(\C)$. Then $W_{\C}(\chi_A)=\conv\{ W_{\C}(A), W_{\C}( A^*)\}$.
\end{prop}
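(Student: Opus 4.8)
The plan is to exploit the very special form that $\chi_A$ takes when $A$ is a complex matrix. Since $A\in\M_n(\C)$, in the decomposition $A=A_1+A_2 j$ we have $A_1=A$ and $A_2=0$, so $\chi_A$ collapses to the block-diagonal matrix
\[
\chi_A=\begin{bmatrix} A & 0 \\ 0 & \bar A \end{bmatrix}=A\oplus \bar A.
\]
Everything then reduces to describing the complex numerical range of a direct sum, together with a single application of Lemma \ref{transpose matrices} to replace $\bar A$ by $A^*$.

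First I would prove the auxiliary identity $W_{\C}(A\oplus \bar A)=\conv\big(W_{\C}(A)\cup W_{\C}(\bar A)\big)$. Writing a unit vector $x\in\bS_{\C^{2n}}$ as $x=(u,v)$ with $|u|^2+|v|^2=1$, a direct computation gives $x^*\chi_A x=u^*Au+v^*\bar A v$. Normalizing the nonzero components exhibits this as a convex combination $|u|^2 w_1+|v|^2 w_2$ with $w_1\in W_{\C}(A)$ and $w_2\in W_{\C}(\bar A)$, which proves the inclusion $W_{\C}(\chi_A)\subseteq\conv\big(W_{\C}(A)\cup W_{\C}(\bar A)\big)$. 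For the reverse inclusion I would invoke the Toeplitz--Hausdorff theorem: $W_{\C}(\chi_A)$ is convex, and it contains $W_{\C}(A)$ (take $v=0$) as well as $W_{\C}(\bar A)$ (take $u=0$), hence it contains the convex hull of their union.

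Finally I would identify $W_{\C}(\bar A)$ with $W_{\C}(A^*)$. Since $A^*=\bar A^{\,t}$, we have $\bar A=(A^*)^t$, and Lemma \ref{transpose matrices} yields $W_{\C}(\bar A)=W_{\C}\big((A^*)^t\big)=W_{\C}(A^*)$. Substituting into the direct-sum formula gives
\[
W_{\C}(\chi_A)=\conv\big(W_{\C}(A)\cup W_{\C}(A^*)\big)=\conv\{W_{\C}(A),W_{\C}(A^*)\},
\]
as claimed. I do not expect a serious obstacle in this argument; the only points requiring care are checking that the off-diagonal blocks of $\chi_A$ genuinely vanish when $A$ is complex, and correctly tracking the transpose-conjugate relation so that the second summand is matched with $A^*$ rather than with $A$ itself.
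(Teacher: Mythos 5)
Your proposal is correct and takes essentially the same route as the paper: both exploit the block-diagonal form $\chi_A = A \oplus \bar{A}$, write a unit vector of $\C^{2n}$ in components to express $x^*\chi_A x$ as a convex combination of elements of $W_{\C}(A)$ and $W_{\C}(\bar{A})$, and invoke Lemma \ref{transpose matrices} to identify $W_{\C}(\bar{A})$ with $W_{\C}(A^*)$. The only cosmetic difference is in the final hull identification: the paper uses convexity of $W_{\C}(A)$ and $W_{\C}(\bar{A})$ so that pairwise convex combinations already fill $\conv\{W_{\C}(A),W_{\C}(A^*)\}$, whereas you establish the reverse inclusion by applying Toeplitz--Hausdorff to $\chi_A$ itself; both finishes are valid.
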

\begin{proof}
Since $A$ is complex,
\[
\chi_A=\left(
                  \begin{array}{cc}
                    A & 0 \\
                    0 & \bar A \\
                  \end{array}
                \right).
\]
Then, denoting $x=\beta_xz_x$, $y=\beta_yz_y$ with $z_x,z_y\in\bS_{\C^{n}}$ and $\beta_x=|x|, \beta_y=|y|$, we have
\begin{align*}
W_{\C}(\chi_A)=&\Big\{(x^* y^*) \chi_A \left(\begin{array}{c} x \\ y \\ \end{array}\right): (x \;  y) \in \bS_{\C^{2n}}\Big\}\\
              =&  \Big\{x^*A x+y^*\bar A y: (x \;  y) \in \bS_{\C^{2n}}\Big\} \\
              =&  \Big\{ \beta_x^2 z_x^* A z_x  + \beta_y^2 z_y^* \bar A z_y :  z_x, z_y  \in \bS_{\C^{n}}, (\beta_x, \beta_y) \in \D_{\R^2} \Big\}\\
              =& \Big\{ \alpha z_x^*A z_x+(1-\alpha) z_y^*\bar A z_y: z_x, z_y  \in \bS_{\C^{n}}, \alpha \in [0,1] \Big\}\\
              =& \Big\{ \alpha \omega_x+(1-\alpha) \omega_y:\omega_x\in W_{\C}(A),  \omega_y \in W_{\C}(\bar A), \alpha \in [0,1] \Big\}\\
              =&\,\conv\{ W_{\C}(A), W_{\C}( A^*)\}
\end{align*}
where the last equality is a consequence of convexity of the complex numerical range, lemma \ref{transpose matrices} and the following equality
\[
\Big\{ \alpha \omega_x+(1-\alpha) \omega_y:\omega_x\in W_{\C}(A),  \omega_y \in W_{\C}(\bar A), \alpha \in [0,1] \Big\} \]
\[= \Big\{ \alpha \omega_x+(1-\alpha) \omega_y:\omega_x,\omega_y\in W_{\C}(A)\cup \overline{W_{\C}(A)}, \alpha \in [0,1] \Big\}.
\]
\end{proof}
By theorem \ref{theo_convex_yeung}, we have that $W_{\bH}(A)$ is convex if, and only if, $W_{\bH}(A)\cap\C=W_{\C}(\chi_A)$. From Proposition \ref{complex matrices} it follows:
\begin{cor} \label{corol_complex_conv}
Let $A \in\M_n(\C)$. $W_{\bH}(A)$ is convex if, and only if, $W_{\bH}(A) \cap \C=\conv\{W_{\C}(A), W_{\C}(A^*)\}$.
\end{cor}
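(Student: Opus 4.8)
The plan is to obtain the corollary by directly combining Theorem \ref{theo_convex_yeung} with Proposition \ref{complex matrices}; no genuinely new computation is required beyond these two inputs.

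First I would invoke Theorem \ref{theo_convex_yeung}(i). That condition reads as the chain of equalities $W_{\bH}(A)\cap\C=\pi_{\C}(W_{\bH}(A))=W_{\C}(\chi_A)$. Since Au-Yeung's identity $\pi_{\C}(W_{\bH}(A))=W_{\C}(\chi_A)$ holds unconditionally for every $A$ (as recalled in the preliminaries), the content of (i) collapses to the single equality $W_{\bH}(A)\cap\C=W_{\C}(\chi_A)$. Thus convexity of $W_{\bH}(A)$ is equivalent to $W_{\bH}(A)\cap\C=W_{\C}(\chi_A)$, exactly as observed in the sentence preceding the corollary.

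Next I would substitute the value of $W_{\C}(\chi_A)$ furnished by Proposition \ref{complex matrices}, namely $W_{\C}(\chi_A)=\conv\{W_{\C}(A),W_{\C}(A^*)\}$, which is available because $A$ is complex and hence $\chi_A$ is block-diagonal. Replacing $W_{\C}(\chi_A)$ by this convex hull in the equivalence from the previous step yields precisely that $W_{\bH}(A)$ is convex if and only if $W_{\bH}(A)\cap\C=\conv\{W_{\C}(A),W_{\C}(A^*)\}$, which is the desired statement.

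There is no real obstacle here, as the corollary is a formal consequence of the two cited results. The only point warranting a moment's care is verifying that reducing condition (i) to the two-term equality loses no information, i.e.\ that the middle equality $\pi_{\C}(W_{\bH}(A))=W_{\C}(\chi_A)$ holds in general; but this is already supplied in the excerpt, so the argument reduces to a single substitution.
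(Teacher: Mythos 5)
Your proposal is correct and follows essentially the same route as the paper: the text preceding the corollary reduces condition (i) of Theorem \ref{theo_convex_yeung} to the single equality $W_{\bH}(A)\cap\C=W_{\C}(\chi_A)$ (using Au-Yeung's unconditional identity $\pi_{\C}(W_{\bH}(A))=W_{\C}(\chi_A)$ from the preliminaries) and then substitutes Proposition \ref{complex matrices}. Your added remark that the collapse of the three-term equality loses no information is exactly the implicit step the paper takes, so nothing is missing.
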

Next theorem gives a sufficient condition for the convexity of quaternionic numerical range of a complex matrix in terms of the complex numerical range. This condition is a complex analogue of the well known result which states that $W_{\bH}(A)$ is convex if and only if $\pi_{\R}(W_{\bH}(A))=W_{\bH}(A) \cap \R$ (see (ii) in theorem \ref{theo_convex_yeung}).

\begin{thm}\label{thm_convexity_qnr}
Let $A \in\M_n(\C)$. If $\pi_{\R}(W_{\C}(A))=W_{\C}(A) \cap \R$ then  $W_{\bH}(A)$ is convex.
\end{thm}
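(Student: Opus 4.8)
The plan is to verify condition (ii) of Theorem \ref{theo_convex_yeung}, namely that $W_{\bH}(A)\cap\R=\pi_{\R}(W_{\bH}(A))$; this is precisely the real analogue of the complex hypothesis we are handed, and once it holds that theorem forces convexity of $W_{\bH}(A)$. The heart of the matter is the auxiliary identity $\pi_{\R}(W_{\bH}(A))=\pi_{\R}(W_{\C}(A))$, which transports a statement about the quaternionic range into one about the complex range, where the hypothesis $\pi_{\R}(W_{\C}(A))=W_{\C}(A)\cap\R$ lives.

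To establish that identity I would first observe that taking the real part of a quaternion factors through its complex part, so $\pi_{\R}=\pi_{\R}\circ\pi_{\C}$ on $\bH$ and hence $\pi_{\R}(W_{\bH}(A))=\pi_{\R}(\pi_{\C}(W_{\bH}(A)))$. By Au-Yeung's identity $\pi_{\C}(W_{\bH}(A))=W_{\C}(\chi_A)$ together with Proposition \ref{complex matrices}, this equals $\pi_{\R}(\conv\{W_{\C}(A),W_{\C}(A^*)\})$. Since $\pi_{\R}$ is $\R$-linear it commutes with the convex hull, and since conjugation preserves real parts one has $\pi_{\R}(W_{\C}(A^*))=\pi_{\R}(\overline{W_{\C}(A)})=\pi_{\R}(W_{\C}(A))$. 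Finally, $\pi_{\R}(W_{\C}(A))$ is an interval, being the image of a convex set under a linear map, hence already convex, so the convex hull collapses and we are left with $\pi_{\R}(W_{\bH}(A))=\pi_{\R}(W_{\C}(A))$.

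With this in hand the conclusion follows from a short chain of inclusions. On the one hand every real element of $W_{\bH}(A)$ is fixed by $\pi_{\R}$, so trivially $W_{\bH}(A)\cap\R\subseteq\pi_{\R}(W_{\bH}(A))$. On the other hand, by the identity just proved and the hypothesis, $\pi_{\R}(W_{\bH}(A))=\pi_{\R}(W_{\C}(A))=W_{\C}(A)\cap\R$, and since $\bS_{\C^n}\subseteq\bS_{\bH^n}$ gives $W_{\C}(A)\subseteq W_{\bH}(A)$, we get $W_{\C}(A)\cap\R\subseteq W_{\bH}(A)\cap\R$. Combining the two inclusions yields $W_{\bH}(A)\cap\R=\pi_{\R}(W_{\bH}(A))$, and Theorem \ref{theo_convex_yeung}(ii) then finishes the argument.

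The only genuinely delicate point I anticipate is the auxiliary identity $\pi_{\R}(W_{\bH}(A))=\pi_{\R}(W_{\C}(A))$: one must check that passing to the convex hull of the two copies $W_{\C}(A)$ and $W_{\C}(A^*)=\overline{W_{\C}(A)}$ does not enlarge the real shadow, which is exactly where the symmetry of these sets under conjugation and the one-dimensionality of the target $\R$ are used. Everything else is formal bookkeeping with the projections.
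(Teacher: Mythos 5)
Your proof is correct, but it takes a genuinely different route from the paper's. The paper verifies condition (i) of Theorem \ref{theo_convex_yeung} (through Corollary \ref{corol_complex_conv}): it first proves a planar lemma stating that for a convex set $B\subset\C$, the union $B\cup\overline{B}$ is convex if and only if $\pi_{\R}(B)=B\cap\R$; under the hypothesis this collapses $\conv\{W_{\C}(A),W_{\C}(A^*)\}$ to the union $W_{\C}(A)\cup W_{\C}(A^*)$, and the proof then establishes the two inclusions showing this union equals the Bild $W_{\bH}(A)\cap\C$. You instead verify condition (ii), and your key step is the identity $\pi_{\R}(W_{\bH}(A))=\pi_{\R}(W_{\C}(A))$, which—worth noting—holds \emph{unconditionally} for every complex matrix: it follows, exactly as you argue, from $\pi_{\R}=\pi_{\R}\circ\pi_{\C}$, Au-Yeung's identity $\pi_{\C}(W_{\bH}(A))=W_{\C}(\chi_A)$, Proposition \ref{complex matrices}, the fact that a linear map commutes with convex hulls, conjugation symmetry of real parts, and convexity of the one-dimensional image. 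The hypothesis then enters only at the last step, combined with the trivial inclusions $W_{\bH}(A)\cap\R\subseteq\pi_{\R}(W_{\bH}(A))$ and $W_{\C}(A)\subseteq W_{\bH}(A)$. Each approach has its merits: the paper's argument yields strictly more information, namely an exact description of the Bild as $W_{\C}(A)\cup\overline{W_{\C}(A)}$ under the hypothesis (which it exploits later for real matrices), while yours avoids the somewhat delicate segment argument in the paper's lemma $(\ast)$, replacing it with formal bookkeeping about projections, and isolates a clean unconditional identity that could be of independent use. Both proofs ultimately rest on the same two pillars: Au-Yeung's projection identity and Proposition \ref{complex matrices}.
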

\begin{proof}
We begin by proving the following result:

\bigskip

$(\ast)\,\,$If $B\subset\C$ is convex then $B\cup\overline{B}$ is convex if, and only if, $\pi_{\R}(B)=B\cap\R$.

\bigskip

Suppose $B\cup\overline{B}$ is convex. Then, given $\lambda=a+ib\in B$,
$$\frac{1}{2}\lambda+\frac{1}{2}\overline{\lambda}=a=\pi_{\R}(\lambda)$$
and so $\pi_{\R}(\lambda)\in B\cap\R$.

For the converse, suppose $B\cup\overline{B}$ is non convex and consider $\omega=\alpha\lambda_1+(1-\alpha)\overline{\lambda_2}\notin B\cup\overline{B}$, with $0\leq\alpha\leq 1$ and $\lambda_1, \lambda_2\in B$. We claim that $\pi_{\R}(\omega)\notin B$. In fact, if $\pi_{\R}(\omega)\in B$, there is a point $\widetilde{\omega}$ in the segment $[\lambda_1,\lambda_2]\subset B$ with $\pi_{\R}(\omega)=\pi_{\R}(\widetilde{\omega})$ such that $\omega=\beta\pi_{\R}(\omega)+(1-\beta)\widetilde{\omega}$, for some $\beta\in]0,1[$, which is impossible since $B$ is convex. Since
$$\pi_{\R}(\omega)=\alpha \pi_{\R}(\lambda_1)+(1-\alpha)\pi_{\R}(\lambda_2)$$
we conclude that either $\pi_{\R}(\lambda_1)\notin B$ or $\pi_{\R}(\lambda_2)\notin B$ for if $\pi_{\R}(\lambda_1), \pi_{\R}(\lambda_2)\in B$ we would have $\pi_{\R}(\omega)\in B$.

Now, suppose $\pi_{\R}(W_{\C}(A))=W_{\C}(A)\cap\R$. From $(\ast)$, since $W_{\C}(A)$ is convex, $W_{\C}(A)\cup W_{\C}(A^*)=W_{\C}(A)\cup\overline{W_{\C}(A)}$ is convex and so
$$\conv\{W_{\C}(A),W_{\C}(A^*)\}=W_{\C}(A)\cup W_{\C}(A^*).$$
On the other hand, by corollary \ref{corol_complex_conv}, $W_{\bH}(A)$ is convex if, and only if, $W_{\bH}(A)\cap\C=W_{\C}(A)\cup W_{\C}(A^*)$. Since $\C\subset\bH$, we may identify $W_{\C}(A)$ with a subset of $W_{\bH}(A)$. On the other hand, $W_{\C}(A^*)= \overline{W_{\C}(A)}$ and, by similarity, $W_{\C}(A^*)\subset W_{\bH}(A)$. Hence,
$W_{\C}(A)\cup W_{\C}(A^*)\subseteq W_{\bH}(A)\cap\C$.

The converse inclusion comes from
\begin{align*}
 W_{\C}(A)\cup W_{\C}(A^*) & = \conv\{W_{\C}(A),W_{\C}(A^*)\}\\
   & = W_{\C}(\chi_A) \\
   & = \pi_\C(W_{\bH}(A)) \\
   & \supseteq W_{\bH}(A)\cap\C.
\end{align*}
\end{proof}
\begin{rem}\label{remark convexity}
The previous sufficient condition is not necessary as a simple example clarifies. Take $A=\diag(i, 2i) \in \M_2(\C)$. We claim that the numerical range is the disk over the pure quaternions of radius $2$ centered at zero, $W_{\Hq}(A)=\D_{\bP}(0,2)$.

Let $w=x^*Ax\in W_{\bH}(A)$, where $x\in (x_1, x_2)\in \bS_{\bH^2}$. We may write
\begin{equation}\label{omega}
w=\beta_1^2z_1^*iz_1+2\beta_2^2z_2^*iz_2
\end{equation}
where $z_1,z_2\in\bS_{\bH}$ and $\beta_1=|x_1|,\beta_2=|x_2|$. Since $Re(ab)=Re(ba)$, clearly, $Re(w)=0$ and, by the triangle inequality,
\[
|w|\leq \beta_1^2+2\beta_2^2=\beta_1^2+2(1-\beta_1^2)=2-\beta_1^2\leq 2.
\]
Hence, $w\in\bD_{\bP}(0,2)$. Conversely, by similarity, it is enough to show that
\[
\bD_{\bP}(0,2)\cap\C^+\subseteq W_{\bH}(A)\cap\C^+.
\]
The set $\bD_{\bP}(0,2)\cap\C^+$ is the segment $[0,2i]$ and, since the upper Bild is convex \cite{Zh}, we only need to prove that $0, 2i\in W_{\bH}(A)\cap\C^+$. Taking $\beta_1=0, z_1=1, \beta_2=1$ and $z_2=1$ in (\ref{omega}) we have that $2i\in W_{\bH}(A)\cap\C^+$. To show that $0\in W_{\bH}(A)\cap\C^+$ simply take $\beta_1=\sqrt{\frac{2}{3}}, z_1=1, \beta_2=\sqrt{\frac{1}{3}}$ and $z_2=j$ in (\ref{omega}).

From the previous discussion we conclude that $W_{\Hq}(A)$ is convex. On the other hand, the complex numerical range is the segment joining $i$ and $2i$, $W_{\C}(A)=[i, 2i]$, and
\[
\pi_{\R}(W_{\C}(A))=\{0\}\neq \emptyset= W_{\C}(A) \cap \R.
\]
\end{rem}

When the matrix is real we can improve further our previous results since in this case $W_{\C}(A)=W_{\C}(A^*)$. From theorem \ref{theo_convex_yeung} and proposition \ref{complex matrices} we have
\[
\pi_\C(W_{\bH}(A)) = W_{\C}(\chi_A)= W_{\C}(A).
\]
Therefore,
\begin{equation}\label{W_H(A) cap C=W_C(A}
W_{\C}(A)\subseteq W_{\bH}(A)\cap\C\subseteq \pi_\C(W_{\bH}(A))=W_{\C}(A).
\end{equation}

From the above we see that the Bild coincides with the complex numerical range. Using theorem \ref{theo_convex_yeung}, we conclude the following:


\begin{thm}\label{thm_reals}
If $A \in\M_n(\R)$ then $W_{\bH}(A)$ is convex.
\end{thm}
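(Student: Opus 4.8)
The plan is to reduce the statement of Theorem \ref{thm_reals} to condition (i) of Theorem \ref{theo_convex_yeung} by exploiting the extra symmetry enjoyed by real matrices, namely that $A^*=A^t$.

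First I would record the key simplification: for $A\in\M_n(\R)$ we have $A^*=\bar A^t=A^t$, so Lemma \ref{transpose matrices} gives $W_{\C}(A)=W_{\C}(A^t)=W_{\C}(A^*)$. Feeding this into Proposition \ref{complex matrices} collapses the convex hull to a single set: since $W_{\C}(A)$ is itself convex by the Toeplitz--Hausdorff theorem,
$$W_{\C}(\chi_A)=\conv\{W_{\C}(A),W_{\C}(A^*)\}=\conv\{W_{\C}(A)\}=W_{\C}(A).$$

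Next I would assemble the sandwich of inclusions. On one side, identifying $\C$ as a real subspace of $\bH$ makes every complex unit vector a quaternionic unit vector, so $W_{\C}(A)\subseteq W_{\bH}(A)\cap\C$; on the other side, $W_{\bH}(A)\cap\C\subseteq\pi_{\C}(W_{\bH}(A))$ holds by the definition of the projection, and Au-Yeung's identification $\pi_{\C}(W_{\bH}(A))=W_{\C}(\chi_A)$ together with the previous step gives $\pi_{\C}(W_{\bH}(A))=W_{\C}(A)$. Chaining these,
$$W_{\C}(A)\subseteq W_{\bH}(A)\cap\C\subseteq\pi_{\C}(W_{\bH}(A))=W_{\C}(A),$$
which forces all inclusions to be equalities, so that $W_{\bH}(A)\cap\C=\pi_{\C}(W_{\bH}(A))=W_{\C}(\chi_A)$.

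This last chain of equalities is precisely statement (i) of Theorem \ref{theo_convex_yeung}, whence $W_{\bH}(A)$ is convex. The argument is essentially a matter of correctly combining the earlier lemmas, so there is no genuine computational obstacle; the only step requiring care — and the real content of the proof — is the collapse of the convex hull in Proposition \ref{complex matrices}, which hinges entirely on the transpose-invariance of the complex numerical range specializing $W_{\C}(A^*)$ back to $W_{\C}(A)$ in the real case.
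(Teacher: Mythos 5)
Your proposal is correct and follows essentially the same route as the paper: it uses $A^*=A^t$ together with Lemma \ref{transpose matrices} to collapse $\conv\{W_{\C}(A),W_{\C}(A^*)\}$ in Proposition \ref{complex matrices} to $W_{\C}(A)$, then runs the same sandwich $W_{\C}(A)\subseteq W_{\bH}(A)\cap\C\subseteq\pi_{\C}(W_{\bH}(A))=W_{\C}(\chi_A)=W_{\C}(A)$, and concludes via condition (i) of Theorem \ref{theo_convex_yeung}. The only cosmetic difference is that you spell out explicitly why each inclusion holds, which the paper leaves implicit.
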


This result is not true in general since Thompson \cite{Thompson} proved the existence of a quaternionic matrix whose upper Bild does not coincide with the upper part of the complex numerical range for any complex matrix. We can even find a way to compute $W_{\bH}(A)$ out of the complex numerical range of $A$. In fact, since $W_{\bH}(A)$ is given by the equivalence classes of the Bild $W_{\Hq}(A)\cap\C$, from (\ref{W_H(A) cap C=W_C(A}) we conclude:

\begin{thm}\label{thm_eq_classes}
Let $A \in\M_n(\R)$. Then, $W_{\Hq}(A)=\Big[W_{\C}(A)\Big]$.
\end{thm}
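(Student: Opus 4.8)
The plan is to prove the set equality $W_{\Hq}(A)=[W_{\C}(A)]$ by double inclusion, relying entirely on material already in hand: the identity $W_{\bH}(A)\cap\C=W_{\C}(A)$ for real $A$ derived in (\ref{W_H(A) cap C=W_C(A}), together with the two structural facts recorded in the preliminaries — that $q\in W_{\bH}(A)$ forces the whole class $[q]\subseteq W_{\bH}(A)$, and that $q_1,q_2$ are similar precisely when $Re(q_1)=Re(q_2)$ and $|Im(q_1)|=|Im(q_2)|$. Throughout, $[W_{\C}(A)]$ is understood as $\bigcup_{q\in W_{\C}(A)}[q]$, the union of the similarity classes meeting $W_{\C}(A)$.

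First I would dispatch the inclusion $[W_{\C}(A)]\subseteq W_{\bH}(A)$. Any element of the left-hand side lies in $[c]$ for some $c\in W_{\C}(A)$. By (\ref{W_H(A) cap C=W_C(A}), $c\in W_{\C}(A)=W_{\bH}(A)\cap\C\subseteq W_{\bH}(A)$, and since membership in $W_{\bH}(A)$ propagates to the entire similarity class, $[c]\subseteq W_{\bH}(A)$. This yields the inclusion at once.

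For the reverse inclusion $W_{\bH}(A)\subseteq[W_{\C}(A)]$, take any $w\in W_{\bH}(A)$. The key observation is that every similarity class contains a complex representative: by the similarity criterion, $w$ is similar to $c:=Re(w)+|Im(w)|\,i\in\C$. Then $c\in[w]\subseteq W_{\bH}(A)$, whence $c\in W_{\bH}(A)\cap\C=W_{\C}(A)$, again by (\ref{W_H(A) cap C=W_C(A}). As $w$ and $c$ are similar, $w\in[c]\subseteq[W_{\C}(A)]$, completing the argument and hence the equality.

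I do not expect a genuine obstacle, since all the analytic content already resides in the previously established coincidence of the Bild with $W_{\C}(A)$. The only point demanding care is the elementary but indispensable remark that each similarity class meets $\C$ (so that restricting to the Bild discards no class), together with keeping straight, in each of the two inclusions, the direction in which the propagation $[q]\subseteq W_{\bH}(A)$ is applied.
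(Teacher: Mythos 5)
Your proof is correct and follows essentially the same route as the paper: the paper likewise deduces the result from the identity $W_{\bH}(A)\cap\C=W_{\C}(A)$ in (\ref{W_H(A) cap C=W_C(A}) together with the fact that $W_{\bH}(A)$ is the union of the similarity classes of its Bild. You merely make explicit the two details the paper leaves implicit — that each class has a complex representative $Re(w)+|Im(w)|\,i$ and that membership propagates to whole classes — which is a faithful expansion, not a different argument.
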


The above result essentially says that the quaternionic numerical range of a real matrix $A$ corresponds to the rotation in $\bH$, over the reals, of the complex numerical range of $A$.

\section{Examples and applications}

There is a vast class of complex matrices to which we can apply the previous results, namely theorem \ref{thm_convexity_qnr} and theorem \ref{thm_eq_classes}, to conclude about convexity  and shape of quaternionic numerical range. In this section we give some examples that show how we can transport some results from complex numerical range to the quaternionic setting. However, we would like to stress that there are as many examples as the known results for complex numerical range.

The first example provides a full characterization of the numerical range of real $2\times2$ matrices. Naturally, as in the complex case, we have three possible and distinct cases: the numerical range is a line segment, a disk or an ellipsoid.

\begin{example}\label{ex_2x2_matrix}

Let $A \in M_2(\R)$. By the Elliptical Range Theorem  \cite{GR}, the complex numerical range of $A$ is an elliptical disc with foci $\lambda_1, \lambda_2$, at the eigenvalues of $A$. Notice that the numerical range of $A$ can be a line segment or a disk, which can be viewed as a degenerated ellipse. Since the matrix $A$ is real, its eigenvalues are real or a pair of complex conjugate.

If $A$ is normal, then $A$ is unitarily similar to a diagonal matrix $D=\diag(\lambda_1, \lambda_2)$. Then $W_{\Hq}(A)=W_{\C}(A)=[\lambda_1, \lambda_2]$, if $\lambda_1, \lambda_2\in\R$ or $W_{\Hq}(A)=\Big[W_{\C}(A)\Big]= \Big[[\lambda_1,\overline{\lambda_1}]\Big] =\D_{\bP} (\RE (\lambda_1), |\IM (\lambda_1)|)$, if $\lambda_2=\overline{\lambda_1}$, by theorem \ref{thm_eq_classes}.

If $A$ is not normal, then $A$ is unitarily equivalent to an upper triangular matrix
\[B=\left(
                 \begin{array}{cc}
                   \lambda_1 & \omega \\
                   0 & \lambda_2 \\
                \end{array}
              \right), \quad \text{with}\quad \omega\neq 0.\]
If $\lambda_2=\lambda_1$, then $W_{\Hq}(B)=\Big[W_{\C}(B)\Big]=\Big[\D_{\C}\left(\lambda_1, \frac{|\omega|}{2}\right)\Big] =\D_{\Hq}\left(\lambda_1, \frac{|\omega|}{2}\right)$.
If $\lambda_2\neq \lambda_1$, $W_{\C}(B)$ is the elliptical disk
\[W_{\C}(B)=\Bigg\{x+iy\in\C: \frac{(x-c)^2}{a^2}+\frac{y^2}{b^2}\leq 1\Bigg\},\]
where $c=\frac{\lambda_1+\lambda_2}{2}$ and $a, b$ are the semiaxes.
Let $q=z_1+z_2i+z_3j+z_4k\in\Hq$. From theorem \ref{thm_eq_classes} we have  $W_{\Hq}(B)=\Big[W_{\C}(B)\Big]$, so
\begin{eqnarray*}
  W_{\Hq}(B) &=& \{[x+iy]: x+iy\in W_{\C}(B) \} \\
   &=& \{q\in\Hq: \RE(q)=x,  \; |\IM(q)|^2=y^2, \text{for} \; x+iy\in W_{\C}(B) \} \\
   &=&  \Big\{q\in\Hq: \frac{(\RE(q)-c)^2}{a^2}+\frac{|\IM(q)|^2}{b^2}\leq 1\Big\}\\
   &=&  \Big\{q\in\Hq: \frac{(z_1-c)^2}{a^2}+\frac{z_2^2}{b^2}+\frac{z_3^2}{b^2}+\frac{z_4^2}{b^2}\leq 1\Big\}
\end{eqnarray*}

Therefore, $W_{\Hq}(A)$ is a line segment, a disk or a 4-dimensional ellipsoid.
\end{example}


\begin{example}\label{ex_matrix_p}
Let $A\in M_3(\C)$ be the matrix
\begin{equation}\label{matrixp}
A=\left(
                 \begin{array}{ccc}
                  p & x & y \\
                   0 & p & z \\
                    0 & 0 & p\\
                \end{array}
              \right),
\end{equation}
with $p\in\R$ and $xyz=0$. From \cite[Theorem 4.1]{KRS}, $W_\C(A)$ is a disk with centre $p$ and radius $r=\frac{1}{2}\sqrt{|x|^2+|y|^2+|z|^2}$. In this case $\pi_{\R}(W_{\C}(A))=W_{\C}(A) \cap \R$ so, from theorem \ref{thm_convexity_qnr}, we have that $W_{\Hq}(A)$ is convex.

When, in addition, we have $x,y,z\in \R$, i.e., $A$ is a real matrix with $xyz=0$, we can characterize the shape of the quaternionic numerical range of $A$. Since $\Big[\D_{\C}(p,r)\Big]=\D_{\Hq}(p,r)$, it follows from theorem \ref{thm_eq_classes} that \[W_{\Hq}(A)=\D_{\Hq}(p,r).\]
\end{example}


\begin{example}\label{ex_block_matrix}
According to  \cite[Corollary 2.3]{BS}, if $A\in M_n(\C)$ is unitarily equivalent to a matrix of the form
\begin{equation}\label{matrixp}
A=\left(
                 \begin{array}{cc}
                   a_1I_{n_1} & X \\
                   k X^* & a_2I_{n_2} \\
                \end{array}
              \right),
\end{equation}
then $W_{\C}(A)$ is an ellipse (see formulas of foci in \cite[Corollary 2.3]{BS}). If the foci of the ellipse are real or a pair of complex conjugate, the centre of the ellipse is real and
 $\pi_{\R}(W_{\C}(A))=W_{\C}(A) \cap \R$. By theorem \ref{thm_convexity_qnr} we have $W_{\Hq}(A)$ is convex.

We can say more about the shape of the numerical range when $A$ is a real matrix. From theorem \ref{thm_eq_classes} and using the same reasoning of example  \ref{ex_2x2_matrix} it follows that the quaternionic numerical range of A is a 4-dimensional ellipsoid centered at the real line.
\end{example}





\end{document}